\definecolor{codegreen}{rgb}{0,0.6,0}
\definecolor{codegray}{rgb}{0.5,0.5,0.5}
\definecolor{codepurple}{rgb}{0.58,0,0.82}
\definecolor{backcolour}{rgb}{0.95,0.95,0.92}
\lstdefinestyle{mystyle}{
    backgroundcolor=\color{backcolour},   
    commentstyle=\color{codegreen},
    keywordstyle=\color{magenta},
    numberstyle=\tiny\color{codegray},
    stringstyle=\color{codepurple},
    basicstyle=\ttfamily\footnotesize,
    breakatwhitespace=false,         
    breaklines=true,                 
    captionpos=b,                    
    keepspaces=true,                 
    numbers=left,                    
    numbersep=5pt,                  
    showspaces=false,                
    showstringspaces=false,
    showtabs=false,                  
    tabsize=2
}
\DeclareMathOperator{\img}{im}
\DeclareMathOperator{\id}{id}
\DeclareMathOperator{\CRVR}{CRVR}
\newtheorem{theorem}{Theorem}
\newtheorem{proposition}[theorem]{Proposition}
\title{Persistent homology method to detect block structures in weighted networks}
\author[a]{Wooseok Jung}
\affil[a]{Mathematical Institute, University of Oxford, Radcliffe Observatory Quarter, Woodstock Road, Oxford OX2 6GG, UK.}
\keywords{Persistence Homology $|$  Weighted Stochastic Block Model} 
\begin{abstract}
Unravelling the block structure of a network is critical for studying macroscopic features and community-level dynamics. The weighted stochastic block model (WSBM), a variation of the traditional stochastic block model, is designed for weighted networks, but it is not always optimal. We introduce a novel topological method to study the block structure of weighted networks by comparing their persistence diagrams. We found persistence diagrams of networks with different block structures show distinct features, sufficient to distinguish. Moreover, the overall characteristics are preserved even with more stochastic examples or modified hyperparameters. Finally, when random graphs whose latent block structure is unknown are tested, results from persistence diagram analysis are consistent with their weighted stochastic block model. Although this topological method cannot completely replace the original WSBM method for some reasons, it is worth to be investigated further.
\end{abstract}
\begin{document}

\maketitle
\thispagestyle{firststyle}
\ifthenelse{\boolean{shortarticle}}{\ifthenelse{\boolean{singlecolumn}}{\abscontentformatted}{\abscontent}}{}

\dropcap{N}etwork models interpret interactions in complex systems, from biological to social networks and from cellular scale to interactions among countries. But the complexity of real-world networks demands the reduction into low-scale through community detection.  We can study each community as an individual subnetwork and interactions among communities to point out the macroscopic features. Network science always goes with studies of communities, such as epidemic spreading \cite{PhysRevE.101.032309}, functional brain networks \cite{10.1371/journal.pcbi.1000381}, social network sites \cite{BediPunam2016Cdis}, and trading \cite{ReichardtJ.2007Rmfc}.

The stochastic block model (SBM) \cite{HOLLAND1983109} is a statistical inference method to obtain the community structure of a network. The SBM allocates each vertex $v_{i}$ to a block of vertices $C_{r}$ while generating the adjacency matrix of the blocks $A^{Block}_{ij}$ from a probability distribution. Then, it can recover the network whose nodes are the blocks and edges are the block relations.

However, SBM is limited to unweighted networks, but many real-world networks live in the weighted regime. Those weights are necessary to capture the detailed and accurate block structure of a network \cite{thomas2011valued}. The weighted stochastic block model (WSBM) \cite{aicher2013adapting, 10.1093/comnet/cnu026} learns the latent structure from both connectivity and weights and utilizes a variational Bayesian approach to optimize the model. Already WSBM is applied for the human connectome \cite{FaskowitzJoshua2018WSBM}, for example. 

On behalf of the WSBM, The goal of this project is to introduce a topological method which captures the latent block structure of weighted networks to some degree. Networks have various topological properties containing sufficient information for studying low-level interactions \cite{AktasMehmetE.2019Phon}. Although topological approaches cannot suggest exact, sharply-measured properties, we focus on their power to detect generalized configurations tenacious to small perturbations.

We first introduce background knowledge of topological data analysis. We also discuss the fundamentals of the weighted stochastic block model and its considerable limitations. In the next section, we introduce the cropped reciprocal Vietoris-Rips filtration (CRVR), which is a modification of the inverse Vietoris-Rips filtration \cite{10.1371/journal.pone.0066506}. The CRVR filtration fits more naturally in community detection tasks of weighted networks. We study how a block structure determines topological features via investigating assortative, disassortative, core-periphery, and ordered networks. Last, we compare results topological method and the WSBM from simulations through a random network. 

\section*{Backgrounds}
An undirected finite network $\mathcal{G} = (\mathcal{V}, \mathcal{E})$ is weighted if there exists a function $w: \mathcal{E} \longrightarrow \mathbb{R}_{\geq 0}$ assigning each edge to a non-negative weight. Therefore, the weight matrix $W$ of $\mathcal{G}$ can be defined as $W_{ij} = 0$ if vertices $v_{i}, v_{j}$ are disconnected and $W_{ij} = w_{ij}$ if two vertices are connected with the edge weight $w_{ij}$. 

Measures of structural properties can be defined similarly to the unweighted version. For example, the clustering coefficient of a weighted network with size $N$ is defined as follows \cite{Barrat3747}:
\begin{equation}
    C_{i} = \frac{1}{s_{i}(k_{i}-1)}\sum_{j,h}\frac{w_{ij} + w_{jh}}{2}a_{ij}a_{ih}a_{jh}
\end{equation}
where $a_{ij}$ is the $ij$-element of the adjacency matrix and $s_{i} = \displaystyle\sum_{j=1}^{N}a_{ij}w_{ij}$ is the strength of the node $i$. 

The weighted environment of a network facilitates the filtration of the topological complexes. The most fundamental configuration is the simplicial complex, which is defined by a pair $(V, K)$ with following rules: 
\begin{enumerate}
    \item For all $v \in V$, $\{v\} \in K$.
    \item For all $\tau \in K$, every subset of $\tau$ is in $K$.
\end{enumerate}
So, $\Sigma$ is a collection of subsets of $V$, and denote $K$ as a simplicial complex over $V$ rather than the pair notation for simplicity. Elements of $K$ are called simplices, and $\sigma$ is a face of $\tau$, denoting $\sigma \leq \tau$, if $\sigma \subset \tau \in K$. A subset $L$ of $K$ is a subcomplex if $L$ itself forms a simplicial complex over a subset of $V$.

A filtration of a simplicial complex $K$ is a nested sequence of subcomplexes of $K$ such that:
\begin{equation}
    F_{0}K \subset F_{1}K \subset F_{2}K ... \subset F_{n}K = K.
\end{equation}
On a finite metric space $(M,d)$, the Vietoris-Rips complex $VR_{\epsilon}(M)$ is a simplical complex where $\{v_{0}, v_{1} \dots , v_{k}\} \subset M$ is a simplex in $VR_{\epsilon}(M)$ if and only if $\max_{i,j}d(v_{i},v_{j}) \leq \epsilon$.  Clearly $VR_{\delta}(M)$ is a subcomplex of $VR_{\epsilon}(M)$ for any $0 \leq \delta \leq \epsilon$. Therefore, we can define the filtration over a continuous index. 

For each $k \geq 0$, the $k$-th chain group of a simplicial complex $K$ is the vector space $C_{k}(K)$ over $\mathbb{F}$ spanned by the $k$-simplicies of $K$. Then, the corresponding boundary map $\partial_{k}^{K}:$ $C_{k}(K) \longrightarrow C_{k-1}(K)$ sending each $k$-simplex to its algebraic boundary. Therefore, we can construct a sequence of chain groups connected by boundary maps, such as: 
\begin{equation}
    \cdots C_{k}(K) \xrightarrow{\partial_{k}^{K}} C_{k-1}(K) \xrightarrow{\partial_{k-1}^{K}} \cdots \xrightarrow{\partial_{1}^{K}} C_{0}(K) \xrightarrow{} 0.
\end{equation}
The collection $(C_{*}(K), \partial_{*}^{K})$ is a chain complex if $\partial_{k}^{K} \circ \partial_{k+1}^{K} = 0$ for all $k \geq 0$. The $k-$th homology of this chain complex is, roughly speaking, a vector space spanned by $k-$dimensional cycles which are not boundaries of $k+1$-dimensional manifolds. To be more precise, 
\begin{equation}
    H_{k}(C_{*}, \partial_{*}) = \ker{\partial_{k}} / \img{\partial_{k+1}}.
\end{equation}

Fixing the dimension $k$, the $k-$th homology groups of a given filtration induces another sequence of vector spaces with associated linear map $a_{i \rightarrow j}:$ $H_{k}(F_{i}K) \xrightarrow{} H_{k}(F_{j}K)$ induced from the simplicial inclusion map $\iota_{i \rightarrow j}: F_{i}K \hookrightarrow  F_{j}K$. This sequence $(V_{*}, a_{*}) = (H_{k}(F_{*}K), a_{*})$ is called the persistence module, and finally, the persistent homology group of it is given by
\begin{equation}
    PH_{i \rightarrow j}(V_{*}, a_{*}) = \img{a_{i \rightarrow j}}.
\end{equation}

Note that the persistence homology gives information about where a non-boundary loop arises and when it dies. For example, if a $k$- dimensional loop is born at $t = t_{1}$, then it becomes an element of a basis of $H_{k}(F_{t}(K))$. If it disappears at $t = t_{2}$, then the map $a_{t_{1} \rightarrow t_{2}}$ maps the loop to zero. But for all $t_{3} < t_{2}$, $a_{t_{1} \rightarrow t_{3}}$ maps the loop to a non-trivial element. See more concrete definitions of relevant topological terminologies in \cite{JonssonJakob2005SCoG}.
\section*{Barcodes}
The barcode of a persistence module contains information about when the loops are born and die out. To begin with, for each pair of $0 \leq i \leq j$, the interval module $(I_{*}^{i,j}, c_{*}^{i,j})$ over a field $\mathbb{F}$ is given by
\begin{equation*}
  I_{k}^{i,j} =   
  \begin{cases}
  \mathbb{F} \hspace{0.3cm} \text{if} \hspace{0.1cm} i \leq k \leq j \\
  0 \hspace{0.3cm} \text{otherwise}
  \end{cases}
  \text{and}
  \hspace{0.25cm}
  c_{k}^{i,j} = 
  \begin{cases}
  \id_{\mathbb{F}} \hspace{0.3cm} \text{if} \hspace{0.1cm} i \leq k \leq j \\
  0 \hspace{0.3cm} \text {otherwise}
  \end{cases}.
\end{equation*}
So, we may regard an interval module as a bar of length $j - i$. Then, the structure theorem of persistence modules \cite{botnan2019decomposition} allows to uniquely decompose any persistence module to a direct sum. For any persistence module $(V_{*}, a_{*})$, there exists a set of intervals $B(V_{*}, a_{*})$ such that 
\begin{equation}
    (V_{*}, a_{*}) \cong \bigoplus_{[i,j] \in B(V_{*}, a_{*})}(I_{*}^{i,j}, c_{*}^{i,j})^{\mu(i,j)},
\end{equation}
where $\mu(i,j)$ is the multiplicity of the interval $[i,j]$. Hence, each interval module denotes a lifespan of a loop which arises at $i$ and disappears at $j$. 

Figure \ref{figs1} shows an example of a filtration of a simplicial complex $K$ and its corresponding barcode. Remark that the 0-dimensional homology group is equivalent to space spanned by one's connected components. At $t = 0$, there are three connected components, where each of them is a vertex, namely $v_{0}, v_{1}, v_{2}$. So, $H_{0}(F_{0}K) = \mathbb{F}^{3}$. At $ t = 1 $, one of the connected components is 'absorbed' to another, hence the bottom red bar ends. At $t = 2$, all of the components are now connected, however, there is a loop consisting of the edges of a triangle, and that's why $H_{1}$ starts to be non-trivial. At $t = 3$, the interior of the triangle is filled, so the loop is now disappeared. The single connected component remains, but now $H_{1}$ is dead. From those homology groups, the persistence homology $PH_{0 \rightarrow 1}$, for instance, is $F^{2}$ since it maps $\mathbb{F}^{3}$ to $\mathbb{F}^{2}$ for which $v_{0}$ and $v_{1}$ are mapped to same basis element and $v_{2}$ to the other. 

\subsection*{Computing Barcodes}
Many open-source libraries allowing us to compute persistent homology and find barcodes. For example, Ripser \cite{ctralie2018ripser} and Gudhi \cite{10.1007/978-3-662-44199-2_28} are developed to be used in Python environment. Ripser sticks on computing persistence homology of a Vietories-Rips filtration, but Gudhi is more flexible: accepting customized filtrations. 

\section*{Weighted Stochastic Block Model}

Let $A_{ij}$ be the adjacency matrix of an unweighted network $\mathcal{G}$. Suppose $\mathcal{G}$ contains $K$ latent blocks, and assign each vertex $v_{i}$ to the block index $z_{i} \in {1, \dots K}$. Define another matrix $\theta$ with $0 < \theta_{z_{m}z_{n}} < 1 \forall m,n$ where $\theta_{z_{m}z_{n}}$ represents the probability of connection exists between $z_{m}$ and $z_{n}$. Goal of the stochastic block model is to find the optimal $\theta$ which is the maximum likelihood estimator of $\mathbb{P}(A | z, \theta)= \prod_{i,j}\theta_{z_{i}z_{j}}^{A_{ij}}(1-\theta_{ij})^{1-A_{ij}}$. 

The weighted stochastic block model \cite{10.1093/comnet/cnu026} has an additional step to draw a weight $A_{ij} \in \mathcal{X}$ from the exponential family $(T, \eta)$ parametrized by $\theta$. Indeed, if the adjacency matrix is replaced by the weight matrix, then $A_{ij} = 0$ implies either $i$ and $j$ are not connected or connected but the edge weight is $0$. Therefore, the governing formula of the WSBM samples $A_{ij}$ from distributions from two different exponential families $(T_{E}, \eta_{E})$ and $(T_{W}, \eta_{W})$, the exponential family of edge-interaction distributions and weight distributions respectively. The log-likelihood of the probability of $A$ given $z$ and $\theta$ is:
\begin{equation}
\begin{split}
 \log\mathbb{P}(A | z, \theta) &= \alpha\sum_{i,j \in E}T_{E}(A_{ij})\eta_{E}(\theta_{z_{i}z_{j}}) \\ &+ (1-\alpha)\sum_{i,j \in W}T_{W}(A_{ij})\eta_{W}(\theta_{z_{i}z_{j}})   
\end{split}
\label{eq6}
\end{equation}
where the tuning parameter $\alpha \in [0,1]$ determines the importance of the two families. Furthermore, the degree correction method \cite{PhysRevE.83.016107} can be applied to make the network have heavy-tailed distribution. 

\subsection*{Optimization}

A variational Bayesian method \cite{VarBayes,PhysRevLett.100.258701} is applied to find the optimal MLE of the equation \ref{eq6}. The goal is to find the optimal posterior $\pi{*}(z,\theta) = \mathbb{P}(z, \theta | A) \propto \mathbb{P}(A | z, \theta)\pi(z, \theta)$ by Bayes theorem. As other Bayesian methods, it is impossible to find an analytic solution in general, so we need to approximate $\pi^{*}$ to some variational distribution $q(z, \theta)$, assuming all the latent parameters are independent:  $q(z, \theta) = q_{z}(z)q_{\theta}(\theta)$. 

Let the joint distribution of the data $A$, latent variable $z$, and the model parameter $\theta$ be $p(A,z,\theta)$. The variational free energy of this joint distribution is 
\begin{equation}
    \mathcal{L}(q, \theta) = \mathbb{E}_{z \sim q}[\log p(A,z,\theta) - \log q(z,\theta)].
\end{equation}
Then, after simple algebra, we get 
\begin{equation}
    \mathcal{L}(q, \theta) = \log p(A) - \mathbb{D}_{KL}(q(z,\theta) || p(z,\theta | A))
\end{equation}
where $\mathbb{D}_{KL}$ denotes KL-divergence. Since KL-divergence is always non-negative, $\mathcal{L}(q, \theta)$ forms a lower bound of the log-evidence $\log p(A)$. Therefore, variational Bayesian approach targets to minimize the KL-divergence  i.e. maximize the lower bound $\mathcal{L}(q,\theta)$. 

There are several methods to find variational approximations \cite{10.1093/comnet/cnu026}, or we can adapt more well-known algorithms like variational expectation maximization or coordinate ascent. 

\subsection*{Shortfalls} As other Bayesian methods, WBSM is also sensitive to the choice of prior distribution $\pi(z, \theta)$. A subjective choice of the prior is possible to lead to imprecise results.  Moreover, in the optimization process, $\mathcal{L}(q, \theta)$ can be approximated to a local optimum, not global. Finally, the variational inference is prone to producing highly biased estimates and hard to be generalized \cite{8588399}. 

\section*{Theoretical Results}

The weight of an edge can infer the distance between two vertices.  So, a large weight implies a strong, structurally close relationship. Viewing a weight matrix as a distance matrix, we can bring topological notions introduced previously. The topological method first accesses the pairwise distances of the nodes. Then, it captures the shape of the network data showing a mutual relationship with the community structure.

\subsection*{CRVR Filtration} The Vietoris-Rips filtration of a weighted network introduced in \cite{10.1371/journal.pone.0066506} is defined as a descending sequence of subcomplexes of which $ \{v_{0}, \dots, v_{n}\} \in F_{\epsilon}\mathcal{G}$ if and only if its pairwise weight is greater than $\epsilon$. Hence, $F_{\epsilon_{1}}\mathcal{G} \geq F_{\epsilon_{2}}\mathcal{G}$ for all $\epsilon_{1} \geq \epsilon_{2}$. 

However, constructing such descending sequence is seemingly unnatural. So, we define a developed method called the Cropped Reciprocal Vietoris-Rips filtration (CRVR) starting from defining the distance matrix $D$ with the following rule: 
\begin{equation}
    D_{ij} = 
     \begin{cases}
    \displaystyle 0 \hspace{0.3cm} \text{if} \hspace{0.1cm} i = j \\
    \displaystyle 1/w_{ij}\hspace{0.3cm} \text{if} \hspace{0.1cm} i \neq j \hspace{0.1cm} and \hspace{0.1cm} 0 < \zeta < w_{ij} \\
    \displaystyle 1/\zeta \hspace{0.3cm} \text{otherwise}
  \end{cases}
\end{equation}
with a pre-defined cropping parameter $\zeta$. Here the weights must be non-negative larger $w_{ij}$ implies smaller distance $D_{ij}$ between $v_{i}$ and $v_{j}$. Unlike the WSBM, the target network must be simple and fully-connected but make an edge $(v_{i}, v_{j})$ have arbitrarily small weight (including zero weight) to represent that $v_{i}$ and $v_{j}$ are disconnected. $\zeta$ determines the threshold of disconnectedness and fixes the distances of such pairs of vertices be $1/\zeta$. 

Then, the CRVR filtration $\CRVR_{*}(\mathcal{G})$ is a collection of simplicial complexes constructed as following: 
\begin{enumerate}
    \item $\{v_{i}\} \in \CRVR_{0}{(\mathcal{G})}$ for all $v_{i} \in \mathcal{V}$.
    \item $\{v_{0}, \dots v_{n}\} \in \CRVR_{k}{(\mathcal{G})}$ if and only if $\max_{i,j}D_{ij} \leq k$.
\end{enumerate}
It is a well-defined filtration, and for each $\CRVR_{n}$, we can find its $k-$th homology $H_{k}$ (dropped other notations for simplicity) and can define the corresponding persistence module. Then we can find its persistence homology and draw barcodes. 

\subsection*{Hypothesis} We expect in a weighted network, vertices with small pairwise distance form a community. Although this might clash with conventional notions of clustering in weighted networks, we assume this approach works in examples where weights are more like distance such as social networks defining the weight as mutual intimacy \cite{LI2020122894}.

\section*{Numerical Experiments}

Figure \ref{figs2} displays four different clustering types of networks with four 'groups'. Here we use the term 'group' instead of community to ensure that those groups of vertices do not always form a community in some algorithms. In the example networks, weights of edges connecting vertices from the same block are uniformly sampled from the interval $[1, 10]$, and those connecting vertices of different communities are from $[0,1]$. So, as our assumption to construct the CRVR filtration, the adjacency matrix is filled by $1$ except the diagonal, but the weight matrix is not. 

An assortative network has the most likely structure of a network with four clusters: it has highly connected within groups with weak connections among the groups. Green blocks in the diagram of the weight matrix represent edges with the weight between 1 and 10, and dark blocks show edge weights between 0 and 1. A disassortative network is the opposite of the assortative example: only edges between groups have large weights. The core-periphery structure has one central community with high weights inside and the other communities are strongly connected to the weights, but showing both weak inter-and intra- connections in the other three. Finally, the ordered network has a snake-like structure. It connects adjacent vertex groups with strong weights. These examples showed in \cite{10.1093/comnet/cnu026} divides the cases by connections or non-connection, but here we formulate 'no connections' by small weights. 

\begin{table}[tbhp]
\centering
\caption{Number of partitions and modularity found by the Louvain method}
\begin{tabular}{lrrr}
Type & Number of Partitions & Modularity \\
\midrule
1. Assortative & 4 & 0.5893 \\
2. Disassortative & 4 & 0.0233 \\
3. Core-Periphery & 4 & 0.0454 \\
4. Ordered & 2 & 0.2750 \\
\bottomrule
\end{tabular}
\label{tb1}
\end{table}
\begin{figure}[tbhp]
    \centering
    \includegraphics[width = 0.9\linewidth]{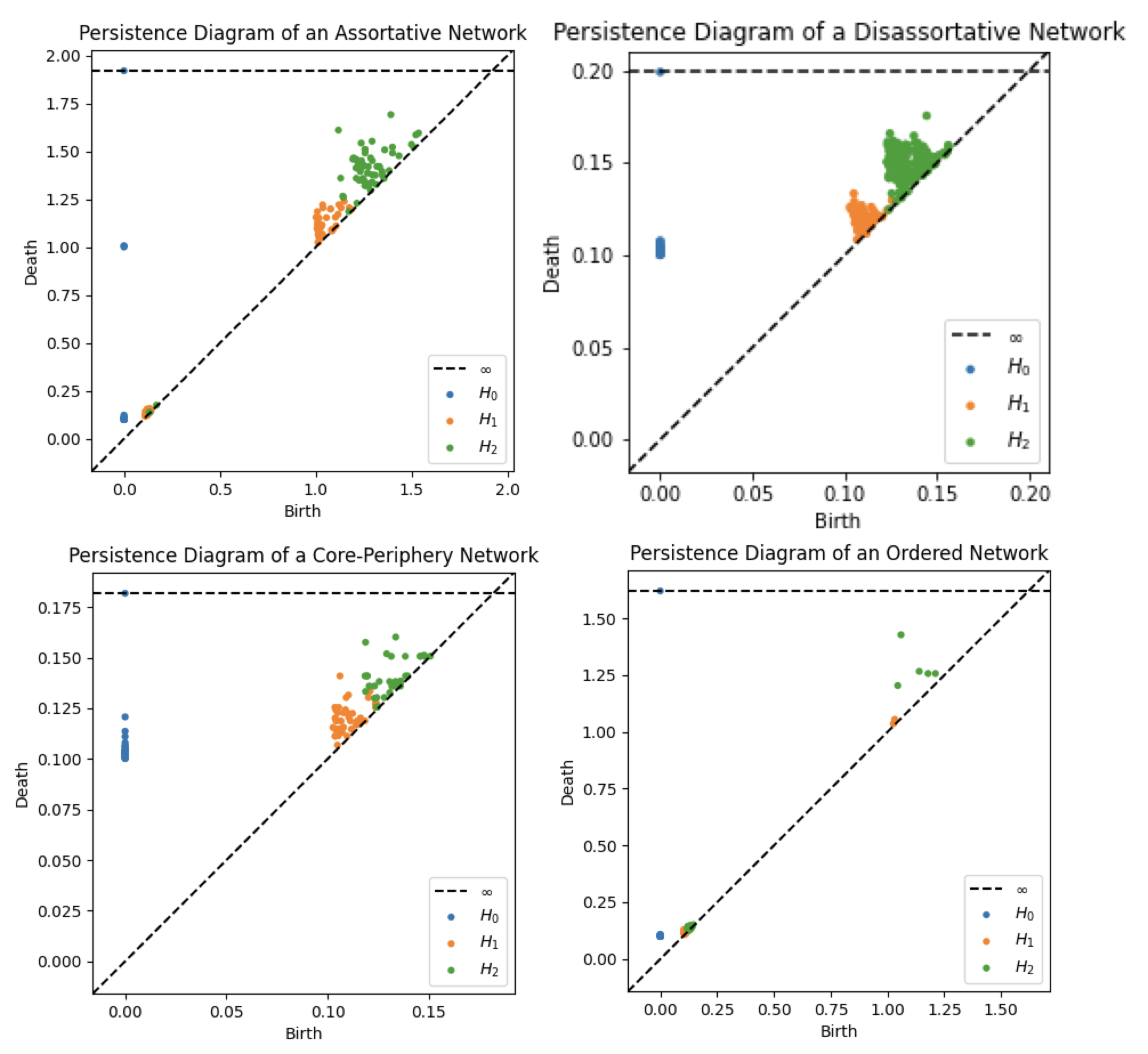}
    \caption{Persistence diagrams of a networks with different block structures: assortative, disassortative, core-periphery, and ordered. Blue, red, and green circles denotes 0, 1, and 2-dimensional barcodes respectively.}
    \label{fig1}
\end{figure}


Table \ref{tb1} shows the number of partitions of the four example networks and their modularities found by the Louvain method \cite{Blondel_2008}. The result of the assortative example is very understanding since it has a clear community structure. However, disassortative and core-periphery examples have very low modularity even though there are four partitions. Indeed, this is the result of the only four networks in which we draw their persistence diagrams, not the average, so while repeating the simulations, several partitions of disassortative and core-periphery networks are not always four: sometimes five or six, even seven. The ordered example showed higher modularity than the previous two, but the number of partitions is two.

Persistence diagrams are graphs to display persistence barcodes. We use the Ripser \cite{ctralie2018ripser} library to plot them. Birth is the index number of the filtration when a barcode starts, and death denotes when it dies. So, simply a persistence diagram depicts the boundary points of barcodes, and they always lie above $y = x$, the dashed diagonal line in a diagram. In the assortative network persistence diagram of Figure \ref{fig1}, we can see the $0-$dimensional homology groups and the connected components are all born at $t = 0$. Some of them die very fast, and some demise at $t = 1$ and then remains infinitely. The distances within a community are at most 0.5, so points within a group rapidly form a single connected component but then absorbed to others starting from $t=1$, and that's why some $0-$dimensional barcodes die at $t = 1$. Note that a single point in the diagram might denote multiple overlapped points. Also, there are $1-$ and $2-$dimensional points born at $t < 0.5$ and dies immediately, and clearly, they are the ones within a community. Moreover, there are even more orange and green points rising after $t = 1$, coming from edges between the vertex groups.

The persistence diagram (PD) of the disassortative example looks the same as Figure assortative PD, but beware that $x$ and $y$ values are different. There is already a single connected component at $t = 0.2$. Also, $H_{1}$ arises from $0.1$, the minimum edge weight. Here 1-dimensional loops are from vertices in different communities. After then, we can check 2-dimensional barcodes are born. 



In the PD of the core-periphery network, the overall configuration of the points looks the same to disassortative, but note that now the orange and green regions overlap. Here all the 1-dimensional loops come from the core group, because the distance between vertices within the core is small, and forming a loop connecting different communities requires a connection between two groups except the core, but such edges have small 'weight.' Similarly, unlike the disassortative network where a 2-dimensional loop whose vertices from different groups. 2-dimensional loops in core-periphery are like a cone. The 'base' is in the core group, and the apex is in another group. But, since edges within a group of a disassortative network have small weight, such cone-like 2-dimensional hollow simplicial complex (i.e. tetrahedron with empty interior) is not relevant.

Finally, an ordered network has a significantly small number of barcodes. The network forms a huge connected component very rapidly since every group is connected sequentially. Like the assortative network, $H_{1}$ and $H_{2}$ barcodes are born around $t = 0.1$, but such loops rarely arise at a higher filtration index since it doesn't form a loop of blocks. Consider a loop of length four $(v_{1}, v_{2}, v_{3}, v_{4}, v_{1})$ whose vertices are from different groups. In an assortative network, such loop requires $t \geq 1$ by the definition of weights of edges between groups. However, in an ordered network, even though the edge $(v_{4},v_{1})$ has a weight $\leq 1$, there could be a path $(v_{4}, v_{3}', v_{2}', v_{1})$ whose each edge weight is greater than one. Concatenating this path to $(v_{1}, v_{2}, v_{3}, v_{4})$ forms a loop which is homotopic to the first one. Hence, such $H_{1}$ and $H_{2}$ loops are seldom in an ordered network but not in the assortative. 

\begin{figure}
    \centering
    \includegraphics[width = .9\linewidth]{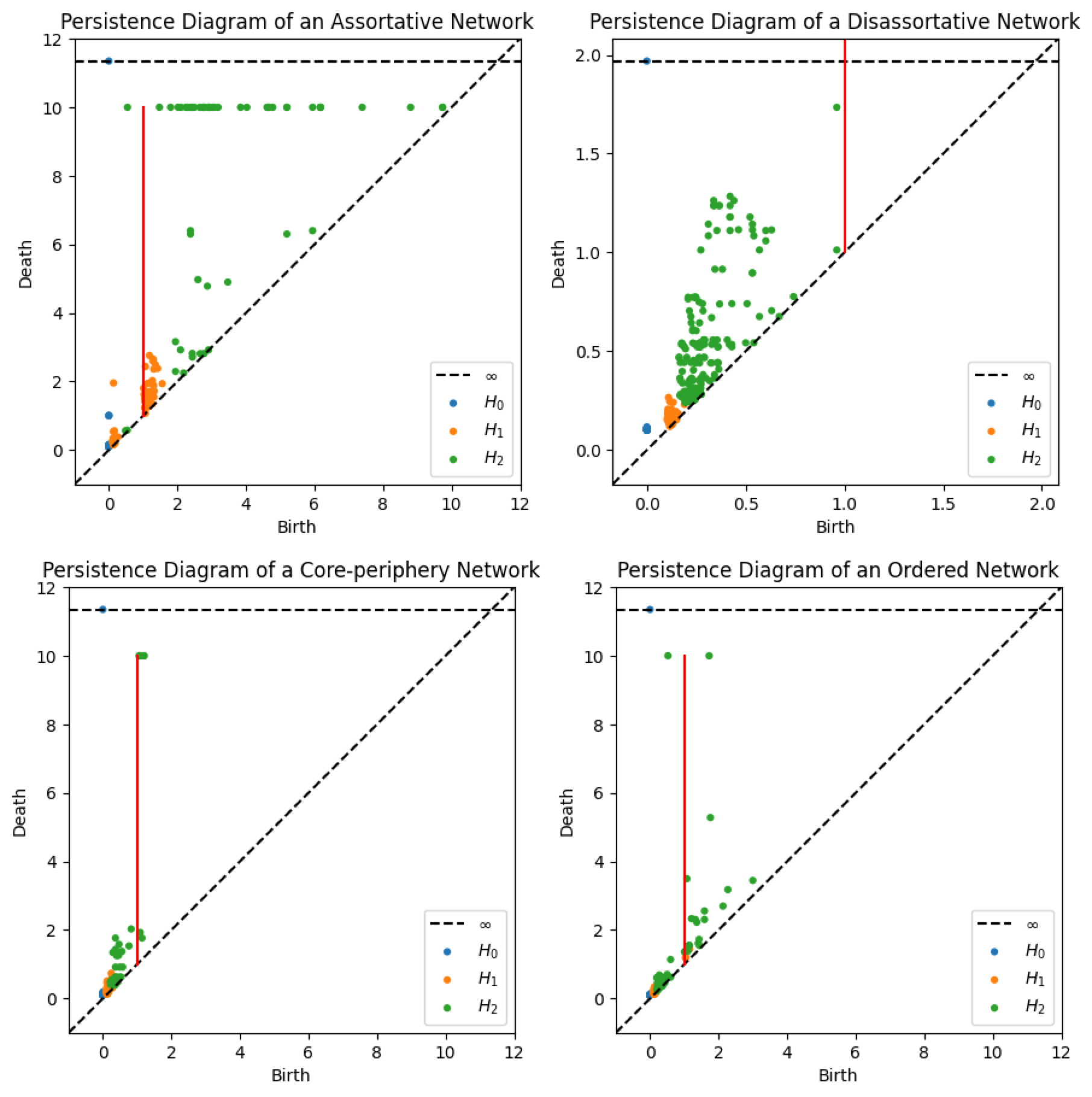}
    \caption{Persistence diagrams of four different clustering types with $p = 0.7$ and $q = 0.3$. The red vertical line in each plot denotes $x = 1$.}
    \label{fig5}
\end{figure}
Not every network has strict weight distributions within or between groups. Suppose vertices within a group are connected by probability $p$ and weights are drawn from $U[1,10]$. Vertices between groups are connected by probability $q$ and weights are drawn from $U[0.1,1]$. 'Non-edges' are regarded as edges with a weight of 0.1. 

Figure \ref{fig5} shows the result. In assortative network, there exists a connected component dies at $t = 1$, and lots of $H_{2}$ barcodes dies at $t = 10$, the maximum distance, as well as some $H_{1}$ barcodes born after $t = 1$. The disassortative network shows all barcodes born at $t < 1$ as before, but we can see some survives longer than those of the disassortative network in Figure \ref{fig1} due to the existence edges with larger weights. In core-periphery, a few $H_{2}$ barcodes born after $t = 1$. The ordered network still shows a relatively small number of barcodes after $t > 1$ than the assortative network, and most of them die earlier. 

\begin{figure}
    \centering
    \includegraphics[width = .88\linewidth]{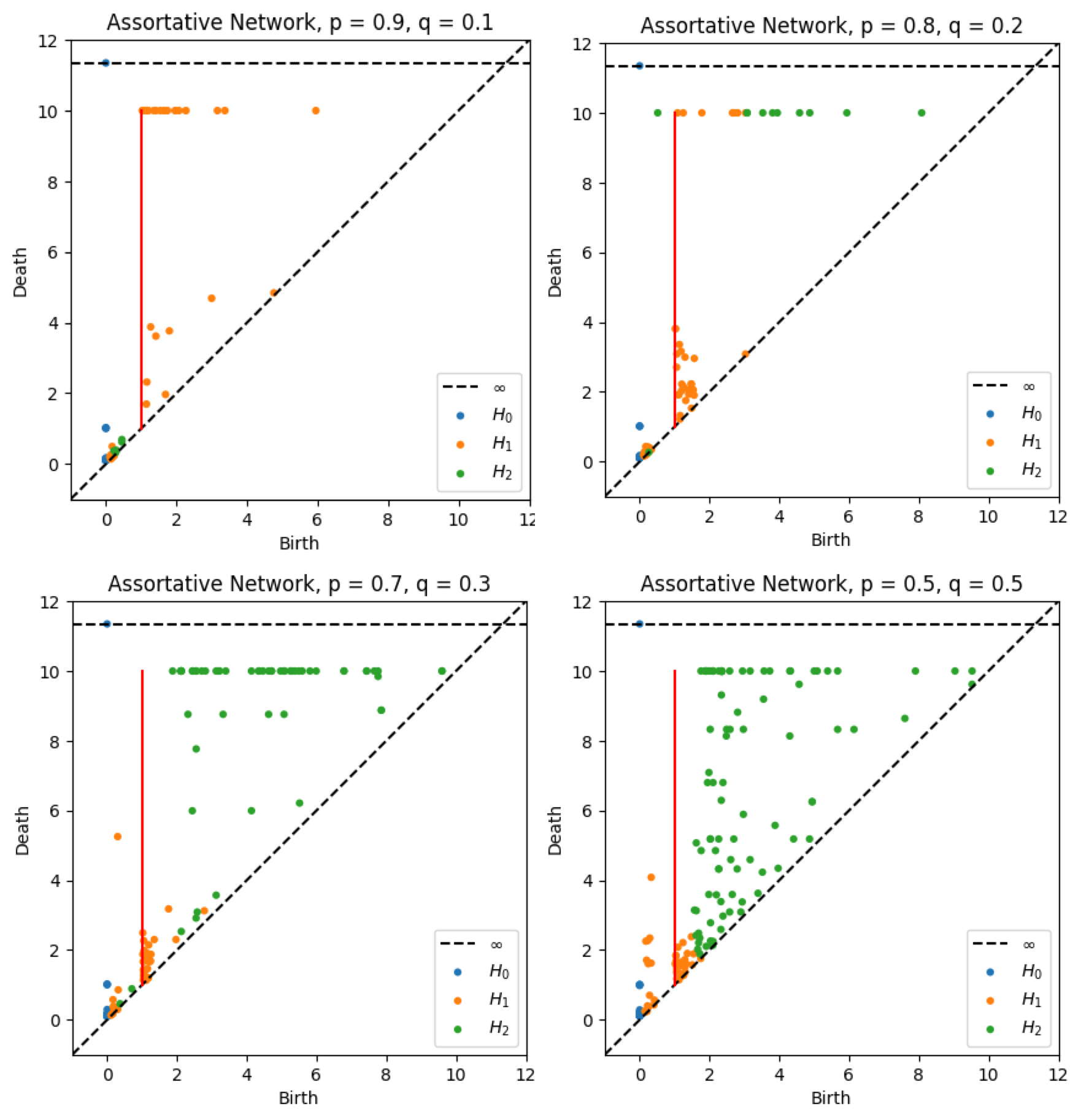}
    \caption{Persistence diagrams of assortative networks with $p$ and $q$ varied.}
    \label{fig6}
\end{figure}

Next, we test how $p$ and $q$ affect the barcodes with fixed block structure. Considering only the assortative block structure, Figure \ref{eq6} displays the number of $H_{2}$ barcodes at $t \geq 1$ increases with $q$ whereas number of $H_{1}$ barcodes increases at $t < 1$. Larger $p$ implies stronger connections within a group, suppressing  $H_{1}$ loops within a group but producing $H_{2}$ loops. Since a small $q$ weakens connections between groups, edges of a $H_{2}$ loop tend to have the minimum weights, making them demise at $t = 10$. When $q$ becomes larger, vertices in the same group likely form $H_{1}$ loops which survive longer, but much more $H_{2}$ loops of groups arise and die before $t = 10$, since more edges among groups prone to have weights greater than 0.1.  

\begin{figure}[tbhp]
    \centering
    \includegraphics[width = \linewidth]{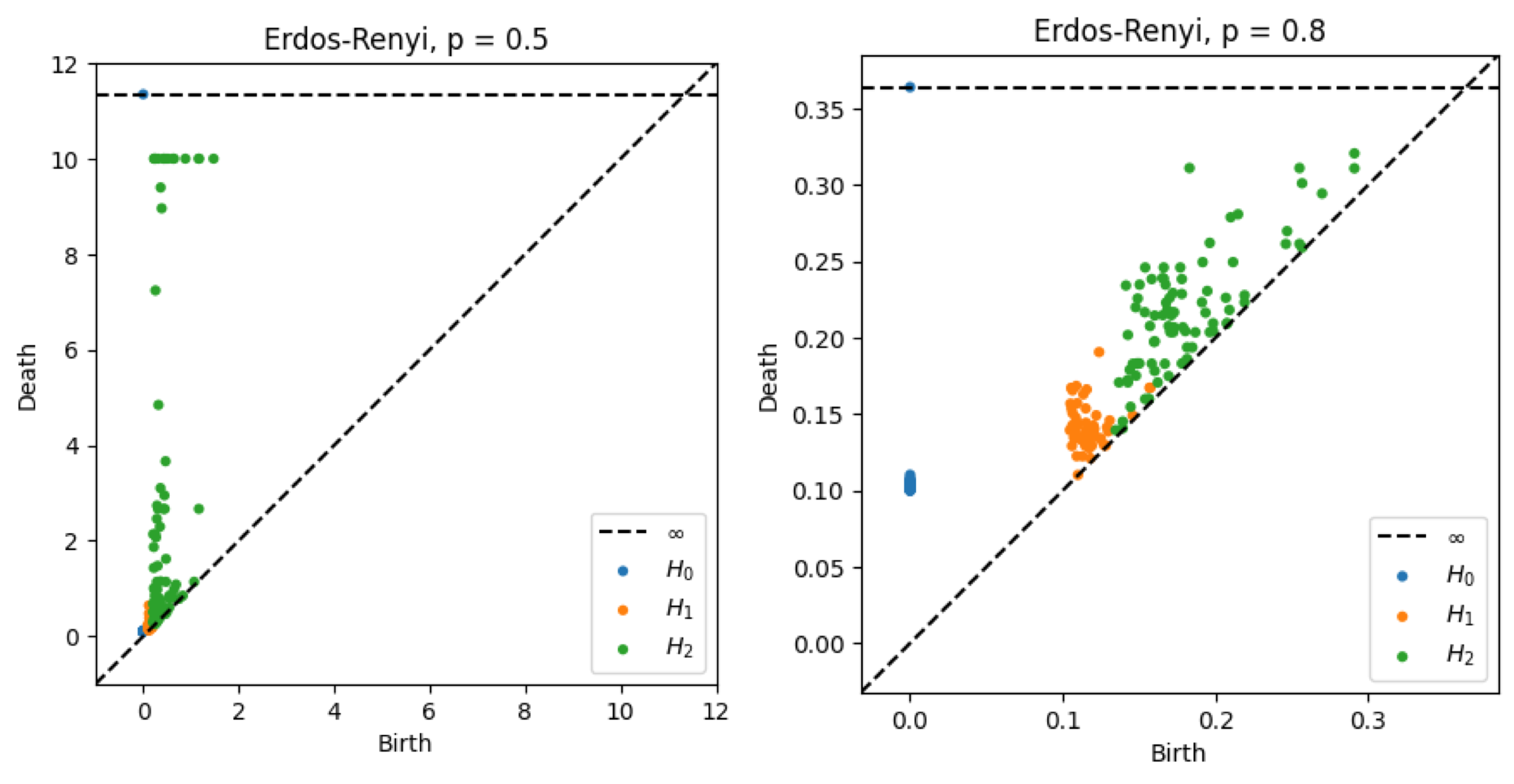}
    \caption{Persistence diagrams of Erdos-Renyi model with the connection probability $p = 0.5$ (Left) and $p = 0.8$ (Right). After generating graphs, weights are picked randomly from $[0.1, 10]$ and assigned to each edge}
    \label{fig7}
\end{figure}
\begin{figure}[tbhp]
    \centering
    \includegraphics[width = \linewidth]{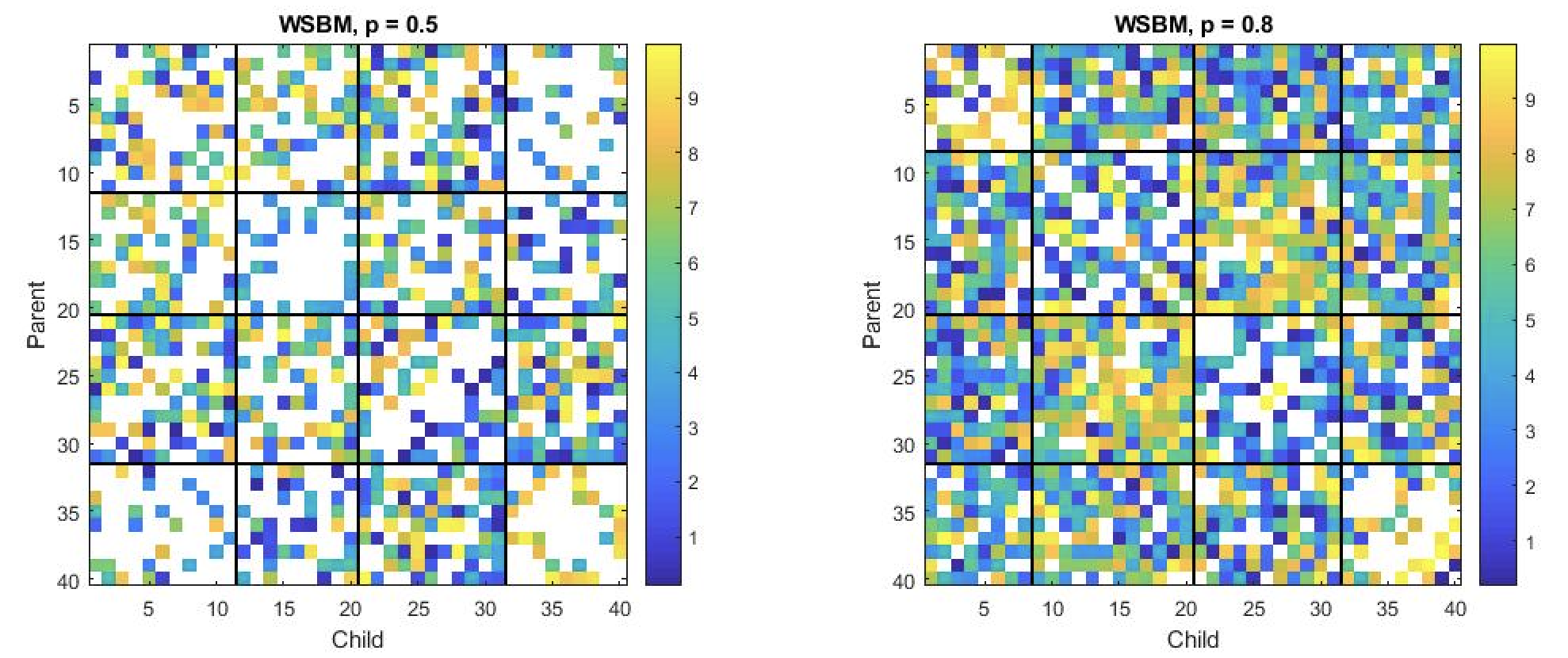}
    \caption{Weighted stochastic block model of weighted Erdos-Renyi graph from Figure \ref{fig7}. The algorithm automatically detects and allocate edges into the initialized number of blocks.}
    \label{fig8}
\end{figure}
We have simulated networks with known block structure until now and showed a significant difference among the structures. However, This naturally induces a question if persistence homology can be used for detecting the unknown block structure of networks. So, we generate Erdos-Renyi graphs, randomly assign edge weights, draw their persistence diagrams, and compare them with the WSBM of the graphs. In Figure \ref{fig7}, when $p = 0.5$, $H_{0}$ disappears immediately and all barcodes are born very fast; some $H_{2}$ barcodes stay for a long time. When $p$ increases to 0.8, all is the same except every barcode, including $H_{2}$, dies fast. Comparing these with Figure \ref{fig1} and \ref{fig5}, we can point out they are similar to core-periphery and disassortative example, respectively. Moreover, on the left, assuming the core-periphery structure, we can see there are more $H_{2}$ barcodes dying later than the core-periphery diagram of Figure \ref{fig5}. This result provides connections between non-core blocks stronger than the default core-periphery structure: disappearing earlier implies a shorter pairwise distance of a loop. 

Will this coincide with the results from the WSBM method? In both simulations, we assume the prior distribution of edge weight to Gaussian and that of edge existence to Bernoulli. In Figure \ref{fig8}, we have two weight heatmaps classified by the algorithm and divided into four blocks (bold black lines). At $p = 0.5$, WSBM is similar to the core-periphery structure in Figure \ref{figs2} with the core at the third block: the third row and column of the block are denser than others. At $p = 0.8$, the diagonal blocks are more sparse than the others, similar to the disassortative structure.  However, unlike the WSBM in Figure \ref{fig8}, persistence diagram analysis cannot provide how strong connections among blocks are. For instance, block 2 and 3 are densely connected at $p = 0.8$, but there is no way to figure it out in Figure \ref{fig7}.
\section*{Conclusions}
Portraiting persistence diagrams of networks and comparing them to the results from the WSBM confirm we can apply topological data analysis for capturing the latent block structure. Patterns of the persistence barcodes from the example structures with four vertex groups are significantly distinct. Some characteristics are perpetual even if the default network is perturbed by modifying the spectrum of weights in groups and pairwise connectivity. Furthermore, persistence diagram analysis of a network with unknown latent structure accurately classified the block type after validating them with the WSBM. 

However, there are some limitations of our topological method to replace the conventional WSBM. First, it does not provide sufficient information for community detection. Most importantly, the number of latent blocks is unknown. For example, assortative networks with 4 clusters and 8 clusters do not show a notable difference between their persistence diagram. Also, it does not reveal the address of each vertex. Although the persistent homology method can detect the general latent structure, we cannot determine the connections. Additionally, our construction of filtration does not embed the network data into a metric space. Vietoris-Rips filtration is defined only on a metric space, but our definition of the distance matrix $D$ in the CRVR filtration is not a metric since it does not satisfy the triangle inequality. Although it does not contradicts the definition of persistence homology, but it leaves some issues. See the Supporting Information section A. Finally, generalization to unweighted networks is difficult. We need to compute the length of the shortest paths between all pairs of vertices, but it is computationally expensive if the network is massive. Indeed, if the network is small enough, it could show better performance since the shortest path measure is precisely a metric.   

Even though there are many limitations, our method for uncovering the latent block structure is worth to be investigated more because of its stability in noisy data and capability to encapsulate the rough structure. Therefore, if a project aims to simply capture and classify the block structures of networks, then the topological method might be applicable. More complicated tasks with TDA are available. From the persistence diagrams, we can measure the distance between sets of barcodes by computing their persistence landscape \cite{BubenikP2015STDA}. Afterwards, we can vectorize those topological values and utilize them for the classification of networks, or unsupervised learning works to observe clustering of networks with various block structure. We hope our new approach ignites a new pathway in network science. 

\bibliography{main}

\newpage

\regtotcounter{section}
\regtotcounter{figure}
\regtotcounter{table}
\regtotcounter{NAT@ctr}  
\setcounter{figure}{0}
\setcounter{theorem}{0}
\renewcommand{\thetable}{S\arabic{table}}
\renewcommand{\thefigure}{S\arabic{figure}}

\section*{Supporting Information}
\subsection{Why metric condition is important}
Let $(M,d)$ be a finite metric space with the associated metric $d$. Assume we construct the Vietoris-Rips complex $VR_{\epsilon}(M)$ of this space. Suppose there exists a point $x \in M$ such that $\max_{y \in M}{d(x,y)} = \epsilon$. Then we have the following proposition: 
\begin{proposition} 
$H_{k}(VR_{\epsilon}(M)) = 0 $ for all $k > 0$.
\end{proposition}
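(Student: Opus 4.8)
The plan is to recognize that the maximality hypothesis forces the vertex $x$ to be a cone apex of the entire complex, from which contractibility, and hence the vanishing of $H_k$ for $k>0$, follows immediately.

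First I would unpack the hypothesis. Since $\max_{y \in M} d(x,y) = \epsilon$, every vertex $y \in M$ satisfies $d(x,y) \le \epsilon$. In the language of the Vietoris--Rips construction this says that the edge $\{x,y\}$ belongs to $VR_{\epsilon}(M)$ for every $y$; the apex $x$ ``sees'' every other point at scale $\epsilon$. Next I would upgrade this from edges to arbitrary simplices, i.e.\ show that $VR_{\epsilon}(M)$ is a cone with apex $x$. Take any simplex $\sigma = \{v_0, \dots, v_m\} \in VR_{\epsilon}(M)$, so that $\max_{i,j} d(v_i,v_j) \le \epsilon$. Adjoining $x$ introduces only the new pairs $(x, v_i)$, and each of these satisfies $d(x,v_i) \le \epsilon$ by the previous step; hence the maximum over all pairs in $\sigma \cup \{x\}$ is still $\le \epsilon$, so $\sigma \cup \{x\} \in VR_{\epsilon}(M)$. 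Thus the complex is closed under coning with $x$.

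Finally I would conclude. A simplicial complex that is a cone with apex $x$ deformation retracts onto the point $x$ and is therefore contractible, so its reduced homology vanishes in every degree; in particular $H_k(VR_{\epsilon}(M)) = 0$ for all $k>0$. If one prefers a purely algebraic argument avoiding topological contractibility, I would instead write down the cone chain homotopy $h \colon C_k \to C_{k+1}$ given by $h(\sigma) = \{x\} * \sigma$ for simplices $\sigma$ not containing $x$ (and $h(\sigma)=0$ otherwise), and verify $\partial h + h \partial = \id$ in positive degrees using $\partial(\{x\}*\sigma) = \sigma - \{x\}*\partial \sigma$; this identity exhibits every $k$-cycle as a boundary when $k>0$, giving $H_k = 0$ directly.

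I expect the only genuine content to be the cone observation in the second step, since everything after it is standard homological algebra. If I pursue the explicit chain-homotopy route rather than citing contractibility, the main care point will be the case split for simplices that already contain $x$ and the accompanying sign bookkeeping in $\partial(\{x\}*\sigma)$, together with tracking the augmentation so that the relation $\partial h + h\partial = \id$ is asserted only in degrees $k>0$. It necessarily fails in degree $0$, which is consistent, since $H_0(VR_{\epsilon}(M)) \neq 0$.
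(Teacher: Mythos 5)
Your proposal is correct and takes essentially the same route as the paper: both rest on the observation that $d(x,y) \le \epsilon$ for every $y \in M$, so any simplex of $VR_{\epsilon}(M)$ can be coned off with apex $x$. Your write-up is in fact tighter than the paper's --- the paper argues simplex-by-simplex on a cycle and asserts the conclusion somewhat loosely (``every component of a $k$-cycle retracts to a contractible module''), whereas you make the cone structure of the whole complex explicit and, via the chain homotopy $\partial h + h\partial = \id$ in positive degrees, rigorously exhibit every cycle as a boundary.
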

\begin{proof}
Suppose we have a $k-$simplex $\{y_{1}, \dots, y_{k}\}$ constituting a $k-dimensional$ cycle. The $k-$simplex is a $k-$face of the simplex $\{y_{1}, \dots, y_{k}, x\}$ by the definition of $x$, this is a solid $k+1$-simplex whose geometric realization is contractible. This implies every component of a $k-$cycle retracts to a contractible module, hence the $k-$th homology is trivial. 
\end{proof}
This implies there is no need to go after the filtration index $\epsilon$ if such $x$ exists. In the $\CRVR_{\epsilon}{(M)}$, however, a loop of edge length $> \epsilon$ which is not contractible can exist. So, there is a possibility that the persistence diagram cannot investigate all cycles of interest in our framework. But such $x$ rarely exists at small $\epsilon$ in general.  
\subsection{Figures} Here is the list of supplementary figures.

\begin{figure}[htbp]
\centering
\includegraphics[width=.9\linewidth]{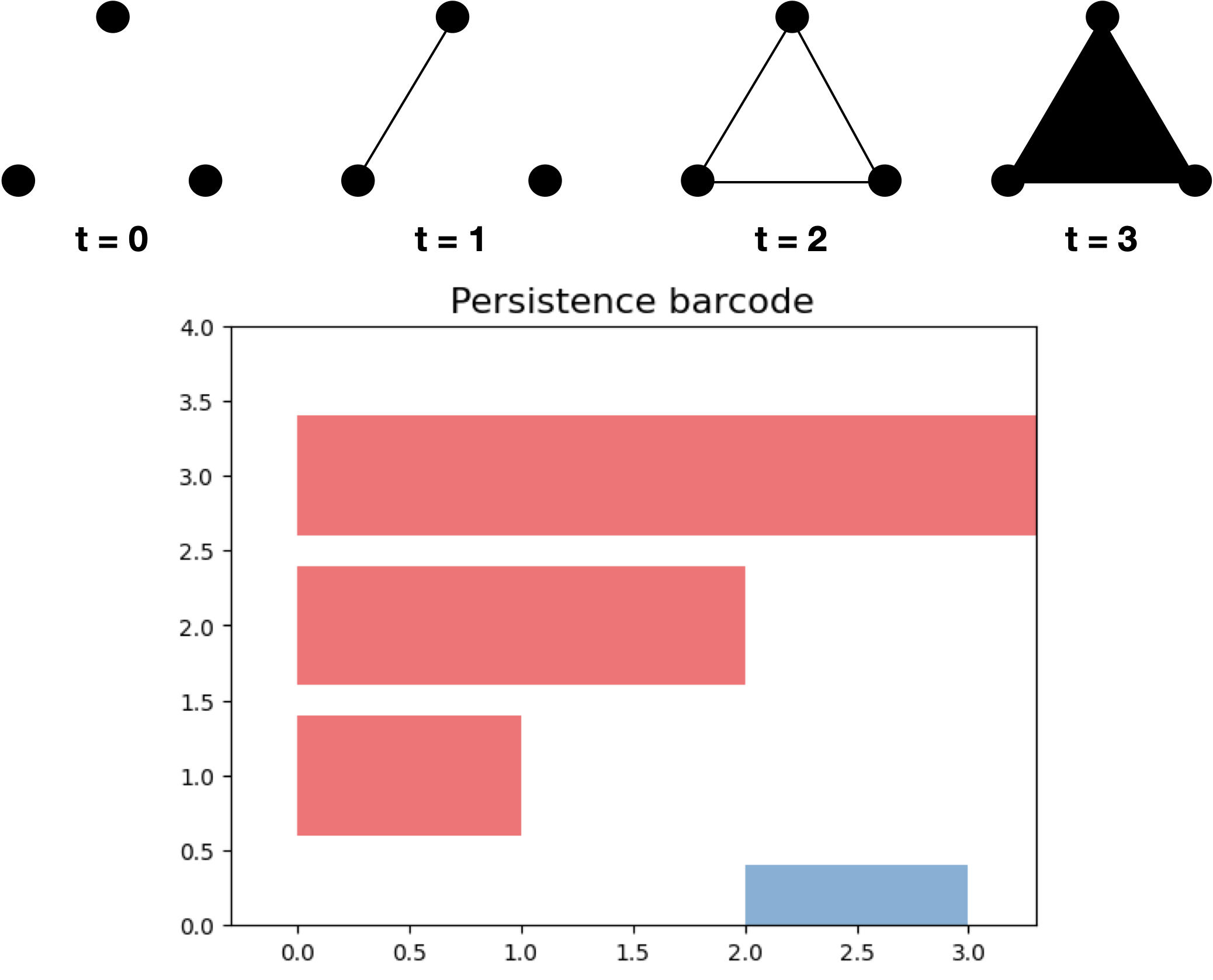}
\caption{An example of a filtration of the solid 2-simplex and its corresponding barcodes. The red bars denote the barcodes of $H_{0}$ and the blue one denotes $H_{1}$}
\label{figs1}
\end{figure}

\begin{figure}[htbp]
\centering
\includegraphics[width=.9\linewidth]{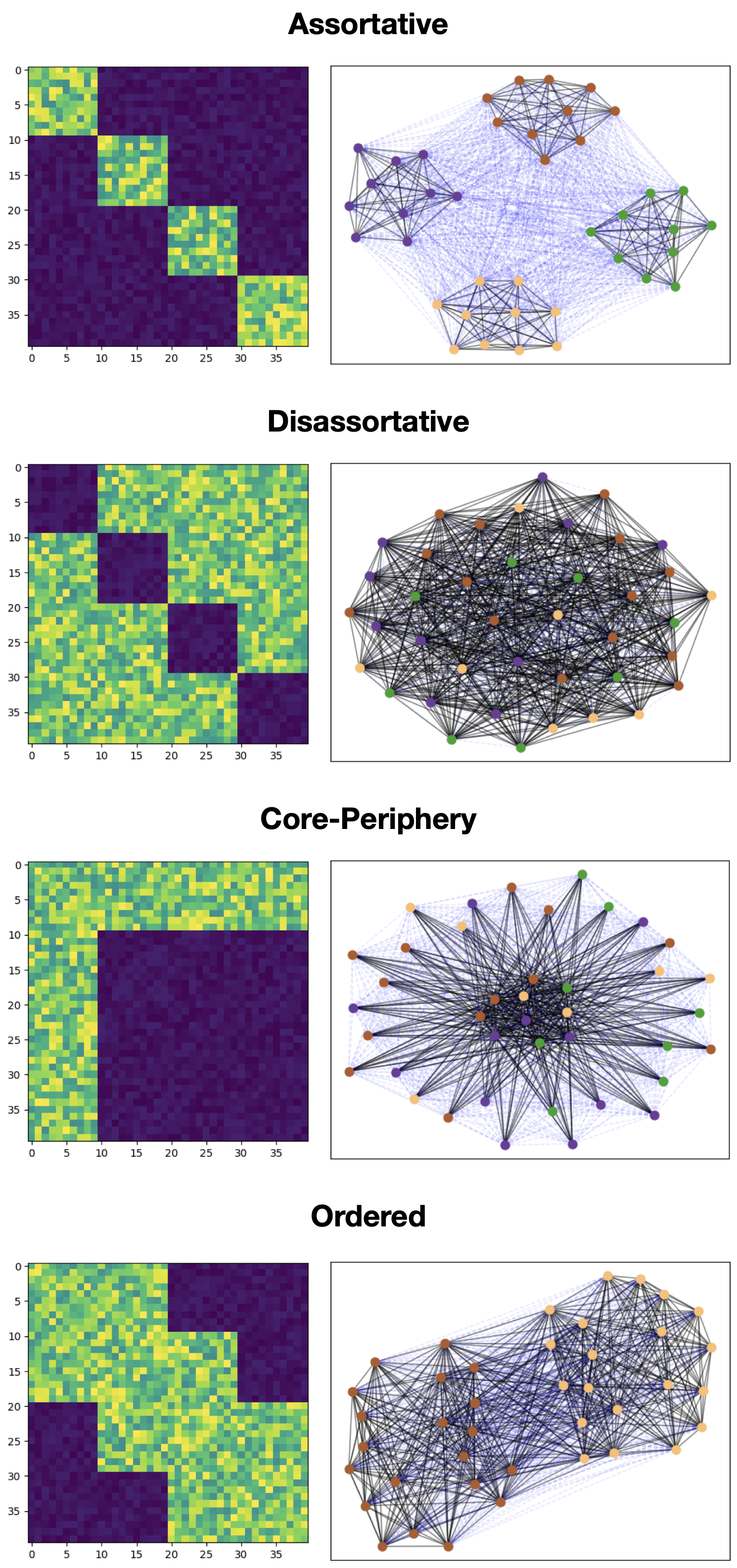}
\caption{Weight matrices (Left) and corresponding graphics of networks (Right) with four different block structures. All networks have 40 vertices, 10 vertices per each group. On the right column, vertices in the same community detected by the Louvain algorithm have the same color. The edges with weight $\geq 1$ are colored by black, and the blue, dashed and faint edges have weight $ < 1$}
\label{figs2}
\end{figure}

\end{document}